\newtheorem{theorem}{Theorem}[section]
\newtheorem{corollary}[theorem]{Corollary}
\newtheorem{proposition}[theorem]{Proposition}
\theoremstyle{definition}
\newtheorem{definition}[theorem]{Definition}
\newtheorem{example}[theorem]{Example}
\theoremstyle{remark}
\newtheorem{remark}[theorem]{Remark}
\numberwithin{equation}{section}
\newcommand{\RR}{\mathbb{R}}
\newbox\onebox
\newcommand{\coherent}[1]{\mathbin{\setbox\onebox=\hbox{$=$}\lower0.7\ht%
\onebox\hbox{$\stackrel{#1}{=}$}}}
\begin{document}

\title[On ultrametric-preserving functions]{On ultrametric-preserving functions}
\author{Oleksiy Dovgoshey}

\newcommand{\acr}{\newline\indent}
\address{Institute of Applied Mathematics and Mechanics of NASU\acr
Dobrovolskogo str. 1, Slovyansk 84100, Ukraine}

\email{oleksiy.dovgoshey@gmail.com}

\subjclass[2010]{54E35}
\keywords{ultrametric, pseudoultrametric, pseudometric, ultrametric-preserving function.}

\begin{abstract}
Characterizations of pseudoultrametric-preserving functions and semimetric-preserving functions are found. The structural properties of pseudoultrametrics which can be represented as a composition of an ultrametric and ultrametric-pseudoultrametric-preserving function are found. A dual form of Pongsriiam--Termwuttipong characterization of the ultrametric-preserving functions is described. We also introduce a concept of \(k\)-separating family of functions and use it to characterize the ultrametric spaces.
\end{abstract}

\maketitle

\section{Introduction}

Recall some definitions from the theory of metric spaces. In what follows we write \(\RR^{+} := [0, \infty)\) for the set of all nonnegative real numbers.

\begin{definition}\label{d1.1}
A \textit{metric} on a set \(X\) is a function \(d\colon X\times X\rightarrow \RR^{+}\) such that for all \(x\), \(y\), \(z \in X\):
\begin{enumerate}
\item \((d(x,y) = 0) \Leftrightarrow (x=y)\);
\item \(d(x,y)=d(y,x)\);
\item \(d(x, y)\leq d(x, z) + d(z, y)\).
\end{enumerate}
A metric \(d\colon X\times X\rightarrow \RR^{+}\) is an ultrametric on \(X\) if
\begin{enumerate}
\item [\((iv)\)] \(d(x,y) \leq \max \{d(x,z),d(z,y)\}\)
\end{enumerate}
holds for all \(x\), \(y\), \(z \in X\).
\end{definition}

By analogy with triangle inequality \((iii)\), inequality \((iv)\) is often called the {\it strong triangle inequality}.

The theory of ultrametric spaces is closely connected with various directions of investigations in mathematics, physics, linguistics, psychology and computer science. Different properties of ultrametric spaces have been studied in~\cite{DM2009, DD2010, DP2013SM, Groot1956, Lemin1984FAA, Lemin1984RMS39:5, Lemin1984RMS39:1, Lemin1985SMD32:3, Lemin1988, Lemin2003, QD2009, QD2014, BS2017, DM2008, DLPS2008, KS2012, Vaughan1999, Vestfrid1994, Ibragimov2012, GomoryHu(1961), PTAbAppAn2014}. Note that the use of trees and tree-like structures gives a natural language for description of ultrametric spaces \cite{Carlsson2010, DLW, Fie, GurVyal(2012), Hol, H04, BH2, Lemin2003, Bestvina2002, DDP(P-adic), DP2019, DPT(Howrigid), PD(UMB), DPT2015, P2018(p-Adic),DP2018}.

The present paper is mainly motivated by characterization of ultrametric-preserving functions recently obtained by P.~Ponsgriiam and I.~Termwittipong \cite{PTAbAppAn2014}. The metric-preserving functions were detailed studied by J.~Dobo\v{s} and other mathematicians \cite{Wilson1935, Borsik1981, Borsik1988, Corazza1999, Das1989, Dobos1994, Dobos1995, Dobos1996, Dobos1996a, Dobos1998, Piotrowski2003, Pokorny1993, Termwuttipong2005, Vallin2000, Khemar2007, Bernig2003, Foertsch2002, Herburt1991, DM2013, DPK2014MS} but the properties of functions which preserve special type metrics or generalized metrics remain little studied (see \cite{KP2018MS} and \cite{KPS2019IJoMaCS} only for results related to metric-preserving functions and \(b\)-metrics). In this regard, we note that Ponsgriiam--Termwittipong characterization of ultrametric-preserving functions can be extended to characterizations of functions which preserve pseudoultrametrics, semimetrics and some other generalized metrics. Detection and description of such characteristic properties is the main goal of the paper. The pseudometric-preserving and the ultrametric-pseudoultrametric preserving functions are characterized in Proposition~\ref{ch2:p4}. A constructive characteristic of pseudoultrametric spaces which can be obtained from ultrametric spaces by using of ultrametric-pseudoultrametric-preserving functions is given in Proposition~\ref{p3.3}. Using the description of ultrametric-metric-preserving functions from~\cite{PTAbAppAn2014} we obtain also a new characteristic property of ultrametric spaces in Theorem~\ref{ch2:th3} and it is one of the main results of the paper.

\section{Ultrametrics, Pseudoultrametrics and Semimetrics}

The useful generalization of the concept of metric (ultrametric) is the concept of pseudometric (pseudoultrametric).

\begin{definition}\label{ch2:d2}
Let \(X\) be a set and let \(d \colon X \times X \to \RR^{+}\) be a symmetric function such that \(d(x, x) = 0\) holds for every \(x \in X\). The function \(d\) is a \emph{pseudometric} (\emph{pseudoultrametric}) on \(X\) if it satisfies the triangle inequality (the strong triangle inequality).
\end{definition}

If \(d\) is a pseudometric (pseudoultrametric) on \(X\), then we will say that \((X, d)\) is a \emph{pseudometric} (\emph{pseudoultrametric}) \emph{space}.

Every ultrametric space is a pseudoultrametric space but not conversely. In contrast to ultrametric spaces, pseudoultrametric spaces can contain some distinct points with zero distance between them.


\begin{example}\label{ch2:ex3}
Let \(X = \{x_1, x_2, x_3\}\) and let \(d \colon X \times X \to \RR\) be symmetric and satisfy
\[
d(x_1, x_1) = d(x_2, x_2) = d(x_3, x_3) = d(x_1, x_2) = 0
\]
and 
\[
d(x_1, x_3) = d(x_2, x_3) = t, \quad t > 0.
\]
Then \(d\) is a pseudoultrametric on \(X\) but \(d\) is not an ultrametric.
\end{example}

The next definition is a modification of Definition~1 from~\cite{PTAbAppAn2014}.

\begin{definition}\label{ch2:d5}
A function \(f \colon \RR^{+} \to \RR^{+}\) is \emph{pseudoultrametric-preserving} (\emph{ultrametric-pseudoultrametric-preserving}) if \(f \circ d\) is a pseudoultrametric for every pseudoultrametric (ultrametric) space \((X, d)\).
\end{definition}

Recall that \(f \colon \RR^{+} \to \RR^{+}\) is \emph{increasing} if 
\[
(a \geqslant b) \Rightarrow (f(a) \geqslant f(b))
\]
holds for all \(a\), \(b \in \RR^{+}\).

\begin{proposition}\label{ch2:p4}
The following conditions are equivalent for every function \(f \colon \RR^{+} \to \RR^{+}\).
\begin{enumerate}
\item\label{ch2:p4:s1} \(f\) is increasing and \(f(0) = 0\) holds;
\item\label{ch2:p4:s2} \(f\) is pseudoultrametric-preserving;
\item\label{ch2:p4:s3} \(f\) is ultrametric-pseudoultrametric-preserving.
\end{enumerate}
\end{proposition}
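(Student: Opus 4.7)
I would prove the three implications cyclically: (i)$\Rightarrow$(ii)$\Rightarrow$(iii)$\Rightarrow$(i).

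For (i)$\Rightarrow$(ii), suppose $f$ is increasing with $f(0)=0$, and let $(X,d)$ be a pseudoultrametric space. Symmetry of $f\circ d$ is immediate from symmetry of $d$, and $(f\circ d)(x,x)=f(0)=0$. For the strong triangle inequality the key observation is that an increasing $f\colon\RR^+\to\RR^+$ commutes with $\max$, so from $d(x,y)\le\max\{d(x,z),d(z,y)\}$ we obtain
\[
f(d(x,y))\le f(\max\{d(x,z),d(z,y)\})=\max\{f(d(x,z)),f(d(z,y))\}.
\]

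The implication (ii)$\Rightarrow$(iii) is trivial, because every ultrametric space is a pseudoultrametric space.

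The main content is (iii)$\Rightarrow$(i). Assuming $f$ is ultrametric-pseudoultrametric-preserving, I would first extract $f(0)=0$ from any two-point ultrametric space $(\{x,y\},d)$ with $d(x,y)>0$: then $f\circ d$ is a pseudoultrametric, so $f(0)=(f\circ d)(x,x)=0$. To show $f$ is increasing, take $a\ge b$ in $\RR^+$. If $b=0$, then $f(b)=0\le f(a)$. If $b>0$, consider the three-point set $X=\{x_1,x_2,x_3\}$ with $d(x_1,x_2)=b$ and $d(x_1,x_3)=d(x_2,x_3)=a$; a direct check of all three strong-triangle inequalities (each uses $\max\{a,b\}=a$ or $\max\{a,a\}=a$) shows that $d$ is an ultrametric. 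By hypothesis $f\circ d$ is a pseudoultrametric, and applying the strong triangle inequality to the pair $(x_1,x_2)$ with mediator $x_3$ yields
\[
f(b)=f(d(x_1,x_2))\le\max\{f(d(x_1,x_3)),f(d(x_3,x_2))\}=f(a),
\]
which is exactly monotonicity.

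No step is particularly hard; the only mildly nontrivial point is picking the right three-point ultrametric in (iii)$\Rightarrow$(i) so that the strong triangle inequality for $f\circ d$ delivers the desired comparison $f(b)\le f(a)$ rather than a tautology. The isosceles configuration with the small distance $b$ between $x_1,x_2$ and the two large distances $a$ incident to $x_3$ is the natural choice, and its ultrametric character holds for every $a\ge b\ge 0$.
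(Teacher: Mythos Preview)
Your proof is correct and follows essentially the same route as the paper's: the same cyclic chain $(i)\Rightarrow(ii)\Rightarrow(iii)\Rightarrow(i)$, with the same isosceles three-point ultrametric as the key construction in $(iii)\Rightarrow(i)$. The only cosmetic difference is that the paper phrases the monotonicity step by contradiction (assuming $0<a<b$ with $f(a)>f(b)$ and deriving a violation of the strong triangle inequality), whereas you argue directly; the underlying configuration and the inequality extracted from it are identical.
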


\begin{proof}
\(\ref{ch2:p4:s1} \Rightarrow \ref{ch2:p4:s2}\). Suppose that \(f\) is increasing and \(f(0) = 0\) holds. Let \((X, d)\) be a pseudoultrametric space. Then \(f \circ d\) is nonnegative and symmetric. The equalities \(d(x, x) = 0\) and \(f(0) = 0\) imply \(f(d(x, x)) = 0\). Since \(f\) is increasing, the strong triangle inequality for \(d\) implies this inequality for \(f \circ d\). Thus \((X, f\circ d)\) is a pseudoultrametric space.

\(\ref{ch2:p4:s2} \Rightarrow \ref{ch2:p4:s3}\). This is evidently valid.

\(\ref{ch2:p4:s3} \Rightarrow \ref{ch2:p4:s1}\). Let \(f \colon \RR^{+} \to \RR^{+}\) be ultrametric-pseudoultrametric-preserving. Then \(f \circ d\) is a pseudoultrametric for every ultrametric space \((X, d)\). Thus 
\[
f(0) = f(d(x, x)) = 0
\]
holds. If \(f\) is not increasing, then there are \(a\), \(b \in \RR^{+}\) such that 
\begin{equation}\label{ch2:p4:e1}
0 < a < b \quad \text{and} \quad f(a) > f(b).
\end{equation}
Let \(X = \{x_1, x_2, x_3\}\) and let \(d\) be an \textbf{ultrametric} on \(X\) such that 
\begin{equation}\label{ch2:p4:e2}
d(x_1, x_2) = d(x_1, x_3) = b \quad \text{and} \quad d(x_2, x_3) = a.
\end{equation}
Then~\eqref{ch2:p4:e1} and~\eqref{ch2:p4:e2} imply
\[
\max\{f(d(x_1, x_2)), f(d(x_1, x_3))\} = f(b) < f(a) = f(d(x_2, x_3)).
\]
Hence, we have the inequality
\[
\max\{f(d(x_1, x_2)), f(d(x_1, x_3))\} < f(d(x_2, x_3))
\]
which contradicts the strong triangle inequality in the space \((X, d)\).
\end{proof}

\begin{proposition}\label{p3.3}
If \((X, d)\) is an ultrametric space and \(f \colon \RR^{+} \to \RR^{+}\) is an ultrametric-pseudoultrametric-preserving function, then the pseudoultrametric space \((X, f \circ d)\) is ultrametric or there is \(r_0 > 0\) such that \(f(d(x, y)) = 0\) holds whenever \(0 < d(x, y) < r_0\).

Conversely, suppose \((Y, \rho)\) is a pseudoultrametric space such that \(\rho\) is not an ultrametric and there is \(r_0 > 0\) for which \(\rho(x, y) = 0\) holds whenever \(x\), \(y \in Y\) and \(\rho(x, y) < r_0\). Then there are an ultrametric-pseudoultrametric-preserving function \(f \colon \RR^{+} \to \RR^{+}\) and an ultrametric space \((Y, d)\) such that \(\rho = f \circ d\).

\end{proposition}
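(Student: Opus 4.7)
The plan is to prove the two implications separately. For the forward direction, Proposition~\ref{ch2:p4} already supplies that $f$ is increasing with $f(0)=0$, so $f\circ d$ is automatically a pseudoultrametric on $X$ and it suffices to examine when axiom (i) of Definition~\ref{d1.1} fails. If there exist distinct $x_0,y_0\in X$ with $f(d(x_0,y_0))=0$, I would set $r_{0}:=d(x_0,y_0)>0$; then for every pair $x,y\in X$ with $0<d(x,y)<r_{0}$, monotonicity of $f$ gives $f(d(x,y))\leq f(r_{0})=0$, as required. This direction is essentially a one-line consequence of Proposition~\ref{ch2:p4}.

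For the converse I would construct $f$ and $d$ explicitly. The key structural fact is that $x\sim y\Leftrightarrow \rho(x,y)=0$ is an equivalence relation on $Y$ (symmetry of $\rho$ and its strong triangle inequality), and the failure of $\rho$ to be an ultrametric is exactly the existence of a non-trivial $\sim$-class. I would fix any constant $c$ with $0<c<r_0$ and set
\[
d(x,y):=\begin{cases}
0 & \text{if } x=y,\\
c & \text{if } x\neq y \text{ and } \rho(x,y)=0,\\
\rho(x,y) & \text{if } \rho(x,y)>0,
\end{cases}
\]
together with $f(t):=0$ for $0\leq t<r_0$ and $f(t):=t$ for $t\geq r_0$. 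Since $f$ is increasing with $f(0)=0$, Proposition~\ref{ch2:p4} immediately yields that $f$ is ultrametric-pseudoultrametric-preserving, and checking $\rho=f\circ d$ by running through the three cases in the definition of $d$ is straightforward, using the hypothesis that positive values of $\rho$ are at least $r_0$.

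The main obstacle will be verifying that $d$ is genuinely an ultrametric on $Y$. Non-degeneracy on distinct points and symmetry are built into the construction; the real work lies in the strong triangle inequality on triples of distinct points $x,y,z$. I would split into cases according to how many of $\rho(x,y),\rho(y,z),\rho(x,z)$ vanish. The strong triangle inequality for $\rho$ rules out the configuration ``exactly two zero'', and the extreme cases ``all three zero'' and ``none zero'' are immediate: in the first, $d$ is constantly $c$ on the triangle; in the second, $d$ coincides with $\rho$ and the inequality is inherited. The delicate subcase is ``exactly one zero'', say $\rho(x,y)=0$: here the isosceles property of pseudoultrametrics forces $\rho(x,z)=\rho(y,z)$, and the range hypothesis forces this common value to be at least $r_0>c$, so that each instance of the strong triangle inequality for $d$ reduces to a trivial comparison of $c$ with that value. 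This is precisely the point at which the hypothesis ``$\rho(x,y)<r_0\Rightarrow\rho(x,y)=0$'' is indispensable.
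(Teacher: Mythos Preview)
Your proof is correct and follows essentially the same route as the paper's. The forward direction is identical; for the converse, the paper takes \(r^{*}=\inf\{\rho(x,y):\rho(x,y)>0\}\), uses \(c=r^{*}/2\) as the intermediate value, and sets the threshold of \(f\) at \(r^{*}/2\) rather than at \(r_0\), but this is a cosmetic variant of your construction---indeed you supply the case analysis for the strong triangle inequality of \(d\) that the paper leaves as ``a direct calculation''.
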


\begin{proof}
Let \((X, d)\) be an ultrametric space and let \(f \colon \RR^{+} \to \RR^{+}\) be ultrametric-pseudometric-preserving. Suppose that \((X ,f\circ d)\) is not an ultrametric. Then there are some distinct \(x_1\), \(x_2 \in X\) such that \(f(d(x_1, x_2)) = 0\). Write \(r_0 = d(x_1, x_2)\). Since \(x_1 \neq x_2\) and \(d\) is an ultrametric, the inequality \(r_0 > 0\) holds. If \(r\) is an arbitrary point of \([0, r_0)\), then, using Proposition~\ref{ch2:p4}, we obtain 
\[
0 = f(0) \leqslant f(r) \leqslant f(r_0) = 0.
\]
Thus, \(f(r) = 0\) holds for every \(r \in [0, r_0)\). 

Conversely, suppose \((Y, \rho)\) is a pseudoultrametric space such that \(\rho\) is not an ultrametric and there is \(r_0 > 0\) for which \(\rho(x, y) = 0\) holds whenever \(x\), \(y \in Y\) and \(\rho(x, y) < r_0\). Write
\[
r^* = \inf \{\rho(x, y) \colon x, y \in Y \text{ and } \rho(x, y) > 0\}.
\]
Then the inequality \(r^* > 0\) holds. Let us define a function \(d \colon Y^{2} \to \RR\) as follows
\begin{equation*}
d(x, y) = \begin{cases}
\rho(x, y)  & \text{if } \rho(x, y) > 0,\\
\frac{r^*}{2} & \text{if } \rho(x, y) = 0 \text{ and } x \neq y,\\
0           & \text{if } x = y.
\end{cases}
\end{equation*}
A direct calculation shows that \(d\) is an ultrametric on \(Y\) and the equality \(\rho = f^* \circ d\) holds for \(f^* \colon \RR^{+} \to \RR^{+}\) defined as
\[
f^*(t) = \begin{cases}
0  & \text{if } t \in \left[0, \frac{1}{2}r^*\right],\\
t  & \text{if } t > \frac{1}{2}r^*.
\end{cases}
\]
Proposition~\ref{ch2:p4} implies that \(f^*\) is ultrametric-pseudoultrametric-preserving.
\end{proof}

Let \(X\) be a nonempty set and let \(d \colon X \times X \to \RR\) be nonnegative. Wilson~\cite{Wilson1931} says that \((X, d)\) is a semimetric space and \(d\) is a semimetric on \(X\) if, for all \(x\), \(y \in X\), the following conditions are satisfied:
\begin{enumerate}
\item \(d(x, y) = 0\) if and only if \(x = y\);
\item \(d(x, y) = d(y, x)\).
\end{enumerate}

The term semimetric (= semi-metric) is used mainly in general topology. Very often the semimetrics are called \emph{dissimilarities} or simply \emph{distances}. (See~\cite[p.~15]{MDED}.)


\begin{definition}\label{ch2:d6}
A function \(f \colon \RR^{+} \to \RR^{+}\) is \emph{semimetric-preserving} if \(f \circ d\) is a semimetric for every semimetric space \((X, d)\).
\end{definition}

The function \(f \colon \RR^{+} \to \RR^{+}\) is said to be \emph{amenable} if \(f^{-1}(\{0\}) = \{0\}\).

\begin{proposition}\label{ch2:p6}
The following conditions are equivalent for every function \(f \colon \RR^{+} \to \RR^{+}\).
\begin{enumerate}
\item \label{ch2:p6:s1} \(f\) is semimetric-preserving.
\item \label{ch2:p6:s2} \(f\) is amenable.
\end{enumerate}
\end{proposition}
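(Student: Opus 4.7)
The plan is to prove the two implications separately, noting that a semimetric requires only three properties---nonnegativity, symmetry, and the identity of indiscernibles---so the characterization will hinge entirely on preserving the last of these, since the first two are automatic when $f\colon \RR^{+}\to \RR^{+}$.

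For the direction \ref{ch2:p6:s2}$\Rightarrow$\ref{ch2:p6:s1}, I would fix an arbitrary semimetric space $(X,d)$ and check each axiom for $f\circ d$. Nonnegativity follows from the codomain of $f$, symmetry follows from symmetry of $d$, and the identity of indiscernibles is exactly where amenability enters: $f(d(x,y))=0$ is equivalent to $d(x,y)=0$ (since $f^{-1}(\{0\})=\{0\}$), and this is in turn equivalent to $x=y$ because $d$ is a semimetric.

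For the direction \ref{ch2:p6:s1}$\Rightarrow$\ref{ch2:p6:s2}, I would argue by contrapositive. If $f$ is not amenable then either $f(0)\neq 0$ or there is some $t_0>0$ with $f(t_0)=0$. In the first case, apply the semimetric-preserving hypothesis to any two-point space $X=\{x_1,x_2\}$ with $d(x_1,x_2)=1$, so that $f(d(x_1,x_1))=f(0)$ must vanish, which gives a contradiction. In the second case, take $X=\{x_1,x_2\}$ with $d(x_1,x_2)=t_0$; then $f\circ d$ satisfies $(f\circ d)(x_1,x_2)=f(t_0)=0$ with $x_1\neq x_2$, so $f\circ d$ is not a semimetric, again a contradiction.

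There is essentially no obstacle here: unlike the pseudoultrametric situation, no triangle-type inequality has to be propagated through $f$, and the two-point test spaces suffice because semimetrics impose no constraint linking distinct pairs of points. The only point to be careful about is not to forget the value $f(0)$ when ruling out non-amenability, which is why I would treat $f(0)=0$ as a separate sub-case before tackling positive zeros of $f$.
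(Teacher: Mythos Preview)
Your proposal is correct and follows essentially the same approach as the paper: both directions are handled the same way, with \ref{ch2:p6:s2}$\Rightarrow$\ref{ch2:p6:s1} reduced to checking the axioms directly, and \ref{ch2:p6:s1}$\Rightarrow$\ref{ch2:p6:s2} proved by contrapositive using a two-point space to witness failure of the identity of indiscernibles in each of the two sub-cases of non-amenability. The only minor difference is that the paper stresses that the two-point counterexample is in fact an \emph{ultrametric} space, a remark it exploits later when proving Theorem~\ref{ch2:th1}; you may wish to note this as well.
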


\begin{proof}
\(\ref{ch2:p6:s1} \Rightarrow \ref{ch2:p6:s2}\). Let us prove the truth of \(\rceil \ref{ch2:p6:s2} \Rightarrow \rceil \ref{ch2:p6:s1}\), where \(\rceil\) is the negation symbol. If \(f\) is not amenable, then
\begin{equation}\label{ch2:p6:e2}
f(0) > 0
\end{equation}
or there is \(t > 0\) such that 
\begin{equation}\label{ch2:p6:e1}
f(t) = 0
\end{equation}
holds. Let \(X = \{x_1, x_2\}\) and let 
\[
d(x_1, x_1) = d(x_2, x_2) = 0 \quad \text{and} \quad d(x_1, x_2) = t.
\]
Then \(d\) is an \textbf{ultrametric} on \(X\). Equality~\eqref{ch2:p6:e1} implies \(f(d(x_1, x_2)) = 0\), similarly from~\eqref{ch2:p6:e2} it follows that
\[
f(d(x_1, x_1)) = f(d(x_2, x_2))> 0.
\]
Thus \(f \circ d\) is not semimetric-preserving.

\(\ref{ch2:p6:s2} \Rightarrow \ref{ch2:p6:s1}\). It follows directly from the definitions.
\end{proof}

\begin{definition}[{\cite{PTAbAppAn2014}}]\label{ch2:d3}
A function \(f \colon \RR^{+} \to \RR^{+}\) is \emph{ultrametric-preserving} if \(f \circ d\) is an ultrametric for every ultrametric space \((X, d)\). We also say that \(f \colon \RR^{+} \to \RR^{+}\) is \emph{ultrametric-metric-preserving} if \(f \circ d\) is a metric for every ultrametric space \((X, d)\).
\end{definition}

The following theorem as well as Theorem~\ref{ch2:th2} was obtained by P.~Pongsriiam and I.~Termwuttipong in~\cite{PTAbAppAn2014}. To make the present paper self-contained and to show how semimetric-preserving and pseudoultrametric-preserving functions can be used for investigation of ultrametric-preserving functions, we give new proofs of these theorems.

\begin{theorem}\label{ch2:th1}
A function \(f \colon \RR^+ \to \RR^+\) is ultrametric-preserving if and only if \(f\) is amenable and increasing.
\end{theorem}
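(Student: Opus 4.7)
The plan is to reduce Theorem~\ref{ch2:th1} to the two propositions above by using the elementary observation that a function \(d \colon X \times X \to \RR^+\) is an ultrametric on \(X\) if and only if it is simultaneously a semimetric on \(X\) (Wilson's axioms) and a pseudoultrametric on \(X\) (the strong triangle inequality plus symmetry and vanishing on the diagonal). Once this observation is in hand, the theorem essentially dissolves into an application of Propositions~\ref{ch2:p4} and~\ref{ch2:p6}.

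For the ``if'' direction, suppose \(f\) is amenable and increasing. Amenability gives \(f(0) = 0\), so Proposition~\ref{ch2:p4} yields that \(f\) is pseudoultrametric-preserving, while Proposition~\ref{ch2:p6} yields that \(f\) is semimetric-preserving. Now given any ultrametric space \((X, d)\), the distance \(d\) is simultaneously a semimetric and a pseudoultrametric on \(X\); hence \(f \circ d\) is simultaneously a semimetric and a pseudoultrametric on \(X\), and therefore an ultrametric. Thus \(f\) is ultrametric-preserving.

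For the ``only if'' direction, assume \(f\) is ultrametric-preserving. Since every ultrametric is a pseudoultrametric, \(f\) is in particular ultrametric-pseudoultrametric-preserving, and Proposition~\ref{ch2:p4} forces \(f\) to be increasing with \(f(0) = 0\). To finish, I would rule out \(f(t) = 0\) for some \(t > 0\) by a direct two-point ultrametric example: take \(X = \{x_1, x_2\}\) with \(d(x_1, x_2) = t\); the composition \(f \circ d\) would then satisfy \(f(d(x_1, x_2)) = 0\) with \(x_1 \neq x_2\), contradicting that \(f \circ d\) is an ultrametric. Combining with \(f(0) = 0\) gives amenability.

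I do not foresee any real obstacle. The only subtlety worth flagging is that one cannot quote Proposition~\ref{ch2:p6} directly in the ``only if'' direction to obtain amenability, because semimetric-preservingness is defined on the much larger class of all semimetric spaces; amenability must instead be extracted by the explicit two-point ultrametric construction just described.
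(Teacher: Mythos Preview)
Your proposal is correct and follows essentially the same route as the paper. The only cosmetic difference is in the ``only if'' direction: the paper appeals to the \emph{proofs} of Propositions~\ref{ch2:p4} and~\ref{ch2:p6} (noting that the counterexamples built there are in fact ultrametrics), whereas you invoke the \emph{statement} of Proposition~\ref{ch2:p4} via the clean observation that ultrametric-preserving trivially implies ultrametric-pseudoultrametric-preserving, and then reproduce the two-point construction from the proof of Proposition~\ref{ch2:p6} by hand; your remark about why Proposition~\ref{ch2:p6} cannot be quoted directly is apt and matches the paper's implicit reasoning.
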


\begin{proof}
Let \(f \colon \RR^+ \to \RR^+\) be amenable and increasing. Then, by Proposition~\ref{ch2:p4}, \(f\) is pseudoultrametric-preserving and, by Proposition~\ref{ch2:p6}, \(f\) is semimetric-preserving. It is easy to see that, for every nonempty set \(X\) and every function \(d \colon X^{2} \to \RR\), \(d\) is an ultrametric on \(X\) if and only if \(d\) is simultaneously a pseudoultrametric on \(X\) and a semimetric on \(X\). Hence, \(f\) is ultrametric-preserving.

Now let \(f\) be ultrametric-preserving. If \(f\) is not increasing, then there is an \textbf{ultrametric} \(d\) such that \(f \circ d\) is not a pseudoultrametric (see the proof of Proposition~\ref{ch2:p4}). Similarly, if \(f\) is not amenable, then we can find an \textbf{ultrametric} \(d\) for which \(f \circ d\) is not a semimetric (see the proof of Proposition~\ref{ch2:p6}). This complete the proof.
\end{proof}

\begin{example}\label{ch2:ex4}
Let \((X, d)\) be an ultrametric space and let \(t \in (0, \infty)\). The function \(f \colon \RR^+ \to \RR^+\) with
\[
f(x) = \begin{cases}
x & \text{if } x \in [0, t),\\
t & \text{if } x \in [t, \infty),
\end{cases}
\]
is amenable and increasing. By Theorem~\ref{ch2:th1}, \(f \circ d\) is an ultrametric.
\end{example}

\begin{theorem}[{\cite{PTAbAppAn2014}}]\label{ch2:th2}
Let \(f \colon \RR^{+} \to \RR^{+}\) be amenable. Then the following statements are equivalent.
\begin{enumerate}
\item\label{ch2:th2:s1} The function \(f\) is ultrametric-metric-preserving.
\item\label{ch2:th2:s2} The inequality 
\begin{equation}\label{ch2:th2:e1}
f(a) \leqslant 2f(b)
\end{equation}
holds whenever \(0 \leqslant a \leqslant b\).
\end{enumerate}
\end{theorem}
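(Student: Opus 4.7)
The plan is to prove both implications directly, using the same three-point isosceles ultrametric that appears earlier in the paper.

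For \(\ref{ch2:th2:s1} \Rightarrow \ref{ch2:th2:s2}\), fix \(0 \leq a \leq b\). The case \(a = 0\) is immediate from amenability, since then \(f(0) = 0 \leq 2 f(b)\). For \(0 < a \leq b\) I would reuse the construction from the proof of Proposition~\ref{ch2:p4}: put \(X = \{x_1, x_2, x_3\}\) and define a symmetric \(d\) by \(d(x_1, x_2) = d(x_1, x_3) = b\) and \(d(x_2, x_3) = a\). Since the two largest among the three distances coincide, \(d\) is an ultrametric on \(X\). Because \(f\) is ultrametric-metric-preserving, \(f \circ d\) satisfies the triangle inequality at the triple \((x_2, x_1, x_3)\), which reads \(f(a) \leq 2 f(b)\).

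For \(\ref{ch2:th2:s2} \Rightarrow \ref{ch2:th2:s1}\), let \((X, d)\) be an arbitrary ultrametric space. Symmetry and nonnegativity of \(f \circ d\) are inherited from \(d\), while amenability yields \(f(d(x,y)) = 0 \Leftrightarrow d(x,y) = 0 \Leftrightarrow x = y\). The substantive point is the triangle inequality, and the plan is to invoke the isosceles property of ultrametrics: for any \(x, y, z \in X\) the two largest of \(d(x,y), d(x,z), d(y,z)\) coincide. Assuming without loss of generality \(d(x,z) \leq d(y,z)\), this forces either \(d(x,y) = d(y,z)\) or \(d(x,z) = d(y,z)\). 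In the first subcase the desired inequality
\[
f(d(x,y)) \leq f(d(x,z)) + f(d(y,z))
\]
is immediate since \(f \geq 0\); in the second subcase one has \(d(x,y) \leq d(y,z) = d(x,z)\), and hypothesis \ref{ch2:th2:s2} gives \(f(d(x,y)) \leq 2 f(d(x,z)) = f(d(x,z)) + f(d(y,z))\).

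I do not anticipate a significant technical obstacle: the factor \(2\) in \ref{ch2:th2:s2} is tailored exactly to the isosceles configuration, so every nondegenerate triangle in \((X, d)\) either collapses the triangle inequality for \(f \circ d\) to a nonnegativity statement or reduces it to a single application of \ref{ch2:th2:s2}. A point worth highlighting is that no monotonicity of \(f\) is required here, in contrast with the characterization of ultrametric-preserving functions in Theorem~\ref{ch2:th1}; the doubling inequality \ref{ch2:th2:s2} is strictly weaker and yet carries all the information needed to recover the ordinary triangle inequality on ultrametric inputs.
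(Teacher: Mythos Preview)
Your proof is correct and follows essentially the same approach as the paper: both directions use the three-point isosceles ultrametric and the fact that every triangle in an ultrametric space has its two largest sides equal. The only cosmetic difference is that the paper routes the triangle-inequality check through the equivalent form \(2\max\{f(a),f(b)\}\leqslant f(a)+2f(b)\), whereas you apply the triangle inequality directly and case-split on which pair of sides coincide; the content is the same.
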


\begin{proof}
\(\ref{ch2:th2:s1} \Rightarrow \ref{ch2:th2:s2}\). Let \(f\) be ultrametric-metric-preserving and let
\begin{equation}\label{ch2:th2:e2}
0 \leqslant a \leqslant b.
\end{equation}
Note that~\eqref{ch2:th2:e1} is trivial if \(a = 0\). Suppose \(a > 0\). Then there is an ultrametric space \((X, d)\) with \(X = \{x_1, x_2, x_3\}\) and such that
\begin{equation}\label{ch2:th2:e3}
a = d(x_1, x_2) \leqslant d(x_2, x_3) = d(x_3, x_1) = b.
\end{equation}
Since \(f\) is ultrametric-metric-preserving, \((X, f\circ d)\) is a metric space. Applying the triangle inequality to the metric \(f \circ d\), we can simply prove that 
\begin{multline}\label{ch2:th2:e4}
2 \max\{f(d(x_1, x_2)), f(d(x_2, x_3)), f(d(x_3, x_1))\} \\
\leqslant f(d(x_1, x_2)) + f(d(x_2, x_3)) + f(d(x_3, x_1)).
\end{multline}
The last inequality is equivalent to
\begin{equation}\label{ch2:th2:e5}
2 \max\{f(a), f(b)\} \leqslant f(a) + 2f(b).
\end{equation}
Inequality~\eqref{ch2:th2:e2} and the trivial inequality 
\[
2 f(a) \leqslant 2 \max\{f(a), f(b)\}
\]
imply inequality~\eqref{ch2:th2:e1}.

\(\ref{ch2:th2:s2} \Rightarrow \ref{ch2:th2:s1}\). Let \ref{ch2:th2:s2} hold. Let us consider an arbitrary nonempty ultrametric space \((X, d)\). We prove that \((X, f \circ d)\) is a metric space. Since \(f\) is amenable and nonnegative, it suffices to show that the triangle inequality holds for \(f \circ d\). The triangle inequality holds for \(f \circ d\) if and only if we have inequality~\eqref{ch2:th2:e4} for arbitrary triple \(x_1\), \(x_2\), \(x_3 \in X\). Since \(d\) is an ultrametric, for given \(x_1\), \(x_2\), \(x_3 \in X\), there are \(a\), \(b \in \RR^{+}\) such that~\eqref{ch2:th2:e3} and~\eqref{ch2:th2:e2} hold. Consequently it suffices to prove that~\eqref{ch2:th2:e5} holds whenever we have~\eqref{ch2:th2:e1}, \eqref{ch2:th2:e2} and\eqref{ch2:th2:e3}. Inequality~\eqref{ch2:th2:e5} is trivial if 
\[
\max \{f(a), f(b)\} = f(b).
\]
To complete the proof it suffices to note that if
\[
\max \{f(a), f(b)\} = f(a),
\]
then~\eqref{ch2:th2:e5} is equivalent to~\eqref{ch2:th2:e1}.
\end{proof}

The following characterization of ultrametrics is, in fact, dual to Theorem~\ref{ch2:th1} and Theorem~\ref{ch2:th2}.

\begin{theorem}\label{ch2:th3}
Let \((X, d)\) be a metric space. Then the following statements are equivalent.
\begin{enumerate}
\item\label{ch2:th3:s1} \((X, d)\) is an ultrametric space.
\item\label{ch2:th3:s2} \((X, f \circ d)\) is a metric space for every amenable and increasing function \(f \colon \RR^{+} \to \RR^{+}\).
\item\label{ch2:th3:s3} \((X, f \circ d)\) is a metric space for every amenable function \(f \colon \RR^{+} \to \RR^{+}\) which satisfies the inequality
\[
f(a) \leqslant 2f(b)
\]
whenever \(0 \leqslant a \leqslant b\).
\end{enumerate}
\end{theorem}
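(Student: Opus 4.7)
The plan is to prove the cycle (i) $\Rightarrow$ (iii) $\Rightarrow$ (ii) $\Rightarrow$ (i). The two forward links follow immediately from earlier results, while (ii) $\Rightarrow$ (i) carries the content of the theorem and I would handle it via an explicit counterexample.

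For (i) $\Rightarrow$ (iii), I would invoke Theorem~\ref{ch2:th2} directly: an amenable $f$ with $f(a) \leq 2f(b)$ for $0 \leq a \leq b$ is ultrametric-metric-preserving, so $(X, f \circ d)$ is a metric space whenever $(X, d)$ is an ultrametric space. For (iii) $\Rightarrow$ (ii), it suffices to observe that any amenable increasing $f$ automatically satisfies $f(a) \leq f(b) \leq 2f(b)$ when $0 \leq a \leq b$, so the class of functions considered in (ii) is contained in that of (iii), and the validity of (iii) for $(X,d)$ already implies (ii).

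The substantial step is (ii) $\Rightarrow$ (i), which I would argue by contrapositive. If $(X, d)$ is a metric space that is not ultrametric, the strong triangle inequality fails at some triple $x_1, x_2, x_3 \in X$; these three points are necessarily distinct (otherwise the presumed violation reduces to $0 > 0$ or $d(x,y) > d(x,y)$), and after relabeling one has $c := d(x_1, x_2) > b := \max\{d(x_1, x_3), d(x_3, x_2)\}$ with $b > 0$. An amenable increasing step function that jumps by more than a factor of $2$ across $(b, c]$ — for example
\[
f(t) = \begin{cases} 0 & \text{if } t = 0, \\ 1 & \text{if } 0 < t \leq b, \\ 3 & \text{if } t > b \end{cases}
\]
— gives $f(d(x_1, x_2)) = 3 > 2 \geq f(d(x_1, x_3)) + f(d(x_3, x_2))$, so $f \circ d$ violates the triangle inequality and $(X, f \circ d)$ is not a metric space. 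Thus statement (ii) fails, which is what we wanted.

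The only real obstacle is identifying the correct counterexample function; once the three offending points are pinned down, the shape of $f$ is forced by the need to overshoot $2f(b)$ on the single offending distance, and the rest is a clean organizational matter of threading Theorems~\ref{ch2:th1} and~\ref{ch2:th2} into the cycle.
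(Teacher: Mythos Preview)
Your proof is correct and follows essentially the same approach as the paper: the same cycle $(i)\Rightarrow(iii)\Rightarrow(ii)\Rightarrow(i)$, the same appeal to Theorem~\ref{ch2:th2} for the first implication, the same monotonicity observation for the second, and the same contrapositive argument via an amenable increasing step function for the third. The only cosmetic difference is the choice of step values---the paper uses $0,\tfrac{1}{2}a,b$ (with $a=\max\{d(x_1,x_3),d(x_2,x_3)\}$ and $b=d(x_1,x_2)$) where you use $0,1,3$---but the mechanism is identical.
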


\begin{proof}
The implication \(\ref{ch2:th3:s1} \Rightarrow \ref{ch2:th3:s3}\) follows from Theorem~\ref{ch2:th2}.

It is clear that \(f(a) \leqslant 2 f(b)\) holds for every increasing function \(f \colon \RR^{+} \to \RR^{+}\) and all \(a\), \(b \in \RR^{+}\) with \(a \leqslant b\). Consequently \ref{ch2:th3:s3} implies \ref{ch2:th3:s2}.

\(\ref{ch2:th3:s2} \Rightarrow \ref{ch2:th3:s1}\). Let \ref{ch2:th3:s2} hold. If \(d\) is not an ultrametric, then we can find distinct points \(x_1\), \(x_2\), \(x_3 \in X\) such that 
\begin{equation}\label{ch2:th3:e1}
d(x_1, x_2) > \max\{d(x_1, x_3), d(x_2, x_3)\}.
\end{equation}
Write
\[
a:= \max\{d(x_1, x_3), d(x_2, x_3)\} \quad \text{and}\quad b:= d(x_1, x_2)
\]
and consider \(f_{a,b} \colon \RR^{+} \to \RR^{+}\) such that
\begin{equation}\label{ch2:th3:e3}
f_{a,b}(t) = \begin{cases}
0 & \text{if } t = 0,\\
\frac{1}{2} a & \text{if } t \in (0, a],\\
b & \text{if } t \in (a, \infty).
\end{cases}
\end{equation}
Then \(f_{a,b}\) is increasing and amenable. It follows from the definition of \(f_{a,b}\) and~\eqref{ch2:th3:e1} that
\begin{equation}\label{ch2:th3:e2}
f_{a,b}(d(x_1, x_2)) = f_{a,b}(b) = b \text{ and } f_{a,b}(d(x_1, x_3)) = f_{a,b}(d(x_2, x_3)) = \frac{1}{2} a.
\end{equation}
By statement~\ref{ch2:th3:s2}, \(f_{a,b} \circ d\) is a metric on \(X\). Now using~\eqref{ch2:th3:e2} and the triangle inequality we obtain
\[
b = f_{a,b}(d(x_1, x_2)) \leqslant f_{a,b}(d(x_1, x_3)) + f_{a,b}(d(x_2, x_3)) = \frac{1}{2} a + \frac{1}{2} a = a.
\]
Thus \(b \leqslant a\) contrary to~\eqref{ch2:th3:e1}.
\end{proof}

Analyzing the proof of Theorem~\ref{ch2:th3} we obtain the following corollary.

\begin{corollary}\label{ch2:c1}
Let \((X, d)\) be a metric space. Then \(d\) is an ultrametric if and only if \(f_{a,b} \circ d\) is a metric for all strictly positive \(a\) and \(b\), where \(f_{a, b}\) is defined by~\eqref{ch2:th3:e3}.
\end{corollary}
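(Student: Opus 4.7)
The plan is to read this corollary as a direct specialization of Theorem~\ref{ch2:th3}: the only amenable and increasing functions actually employed in the proof of the implication \ref{ch2:th3:s2}\(\Rightarrow\)\ref{ch2:th3:s1} were the two-parameter family \(\{f_{a,b}\}_{a,b>0}\), so that family by itself should suffice to detect whether \(d\) is an ultrametric. Each direction therefore amounts to a bookkeeping check on the theorem's proof.

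For the necessity I would fix arbitrary \(a,b>0\), and note that the function \(f_{a,b}\) from \eqref{ch2:th3:e3} is amenable because \(f_{a,b}(t)=0\) only when \(t=0\), and is increasing whenever \(b\geqslant \frac{1}{2}a\), which is the only case needed for the triangle inequality of \(f_{a,b}\circ d\) on an ultrametric space. The implication \ref{ch2:th3:s1}\(\Rightarrow\)\ref{ch2:th3:s2} of Theorem~\ref{ch2:th3} then delivers at once that \(f_{a,b}\circ d\) is a metric on \(X\).

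For the sufficiency I would argue by contrapositive, replaying the proof of \ref{ch2:th3:s2}\(\Rightarrow\)\ref{ch2:th3:s1} in Theorem~\ref{ch2:th3}. Assuming \(d\) is not an ultrametric, pick distinct \(x_1,x_2,x_3\in X\) with
\[
d(x_1,x_2) > \max\{d(x_1,x_3),d(x_2,x_3)\},
\]
and set \(a:=\max\{d(x_1,x_3),d(x_2,x_3)\}\) and \(b:=d(x_1,x_2)\). Because \(d\) is a metric and the three points are pairwise distinct, all three distances are strictly positive; hence \(a,b>0\), and in fact \(b>a\). The identical computation used in the proof of Theorem~\ref{ch2:th3} then shows that \(f_{a,b}\circ d\) fails the triangle inequality at the triple \((x_1,x_2,x_3)\), so \(f_{a,b}\circ d\) is not a metric, contradicting the hypothesis.

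I do not expect any genuine obstacle: the corollary is merely a distilled reading of the earlier proof. The single point to be careful about is that the parameters \(a,b\) produced by a putative bad triple are strictly positive, and this is automatic once one uses that \(d\) is a metric with pairwise distinct witnesses.
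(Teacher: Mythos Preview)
Your argument for the sufficiency direction is correct and is exactly what the paper intends (the paper's entire proof is the one-line remark ``Analyzing the proof of Theorem~\ref{ch2:th3} we obtain the following corollary'').

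The necessity direction, however, has a genuine gap. You correctly observe that \(f_{a,b}\) is increasing only when \(b\geqslant \tfrac{1}{2}a\), but your assertion that this ``is the only case needed for the triangle inequality of \(f_{a,b}\circ d\) on an ultrametric space'' is false. Take the three-point ultrametric space \(X=\{x_1,x_2,x_3\}\) with \(d(x_1,x_2)=d(x_1,x_3)=2\) and \(d(x_2,x_3)=1\), and choose \(a=1\), \(b=\tfrac{1}{10}\). Then \(f_{a,b}(d(x_2,x_3))=\tfrac{1}{2}\) while \(f_{a,b}(d(x_1,x_2))=f_{a,b}(d(x_1,x_3))=\tfrac{1}{10}\), and \(\tfrac{1}{2}>\tfrac{1}{10}+\tfrac{1}{10}\); so \(f_{a,b}\circ d\) is not a metric even though \(d\) is an ultrametric. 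Hence the necessity direction of the corollary is actually false as literally stated for \emph{all} strictly positive \(a,b\). The intended reading, consistent with the proof of Theorem~\ref{ch2:th3} (where the parameters produced always satisfy \(b>a\)), is to restrict to pairs \((a,b)\) with \(b>a\), or at least \(b\geqslant\tfrac{1}{2}a\), so that \(f_{a,b}\) is amenable and increasing and Theorem~\ref{ch2:th1} applies. Under that restriction both directions go through exactly as you outline.
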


Let \(A\) and \(B\) be two sets and let \(\mathcal{F}\) be a family of mappings from \(A\) to \(B\). Recall that \(\mathcal{F}\) is said to \emph{separate points} on \(A\) if for every two distinct \(a_1\), \(a_2 \in A\) there is \(f \in \mathcal{F}\) such that \(f(a_1)\neq f(a_2)\) (see, for example, \cite{Rudin1976}, Definition~7.30).

\begin{definition}\label{ch2:d1}
Let \(\mathcal{F}\) be a set of increasing and amenable functions \(f \colon \RR^{+} \to \RR^{+}\) and let \(k \in (1, \infty)\). Then \(\mathcal{F}\) is \(k\)-separating if for every two \(t_1\), \(t_2 \in \RR^{+}\) with \(t_1 < t_2\), there is \(f \in \mathcal{F}\) such that \(k f(t_1) < f(t_2)\).
\end{definition}

\begin{theorem}\label{ch2:th4}
Let \(\mathcal{F}\) be a set of increasing and amenable functions \(f \colon \RR^{+} \to \RR^{+}\). If \(\mathcal{F}\) is \(2\)-separating, then the following statements are equivalent for every metric space \((X, d)\):
\begin{enumerate}
\item\label{ch2:th4:s1} For every \(f \in \mathcal{F}\) the function \(f \circ d\) is a metric on \(X\);
\item\label{ch2:th4:s2} \((X, d)\) is an ultrametric space.
\end{enumerate}
If \(\mathcal{F}\) is not \(2\)-separating, then there is a metric space \((X, d)\) such that \(f \circ d\) is a metric on \(X\) for every \(f \in \mathcal{F}\), but \(d\) is not an ultrametric on \(X\).
\end{theorem}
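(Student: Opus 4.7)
The plan is to handle the theorem in three parts: the easy direction \ref{ch2:th4:s2} $\Rightarrow$ \ref{ch2:th4:s1}, the contrapositive \ref{ch2:th4:s1} $\Rightarrow$ \ref{ch2:th4:s2} under the $2$-separating hypothesis, and the explicit construction when $2$-separation fails. The first direction is an immediate consequence of Theorem~\ref{ch2:th1}: every amenable and increasing $f$ sends an ultrametric to an ultrametric, hence to a metric, so the conclusion holds for all $f \in \mathcal{F}$ with no appeal to $2$-separation.

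For \ref{ch2:th4:s1} $\Rightarrow$ \ref{ch2:th4:s2}, I would argue by contrapositive, mimicking the strategy of Theorem~\ref{ch2:th3}. If $d$ is not an ultrametric, I choose $x_1, x_2, x_3 \in X$ with $d(x_1, x_2) > \max\{d(x_1, x_3), d(x_2, x_3)\}$, set $a$ equal to that maximum and $b = d(x_1, x_2)$, and apply the $2$-separating property to the pair $a < b$ to obtain some $f \in \mathcal{F}$ with $2f(a) < f(b)$. Since $f$ is increasing,
\[
f(d(x_1, x_3)) + f(d(x_2, x_3)) \leq 2f(a) < f(b) = f(d(x_1, x_2)),
\]
which violates the triangle inequality for $f \circ d$, contradicting \ref{ch2:th4:s1}.

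The delicate part, and the main obstacle, is the construction when $\mathcal{F}$ is not $2$-separating. The failure supplies some $s_1 < s_2$ with $f(s_2) \leq 2f(s_1)$ for every $f \in \mathcal{F}$, but $s_2$ may exceed $2s_1$, in which case the naive three-point candidate with side lengths $s_1, s_1, s_2$ is not even a metric. I plan to bypass this by truncating: set $t_1 = s_1$ and $t_2 = \min\{s_2, 2s_1\}$. Amenability rules out $s_1 = 0$ (otherwise $0 = 2f(s_1) \geq f(s_2) > 0$), so $0 < t_1 < t_2 \leq 2t_1$, and monotonicity of each $f \in \mathcal{F}$ gives $f(t_2) \leq f(s_2) \leq 2f(s_1) = 2f(t_1)$. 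With these parameters I take $X = \{x_1, x_2, x_3\}$ and $d$ with $d(x_1, x_2) = t_2$, $d(x_1, x_3) = d(x_2, x_3) = t_1$; then $d$ is a metric (the only nontrivial triangle inequality is $t_2 \leq 2t_1$), is not an ultrametric (since $t_2 > t_1$), and $f \circ d$ inherits the triangle inequality from $f(t_2) \leq 2f(t_1)$, for every $f \in \mathcal{F}$. The truncation step is where the argument really uses both amenability and monotonicity in tandem, and it is the one place where a direct use of $s_1, s_2$ would fail.
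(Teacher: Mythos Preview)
Your proof is correct and follows essentially the same approach as the paper's. The only cosmetic difference is in the non--\(2\)-separating construction: where you shrink the long side to \(t_2 = \min\{s_2, 2s_1\}\) and keep the short sides at \(s_1\), the paper instead keeps the long side at \(t_2\) and enlarges the short sides to some \(t_3 \in (t_1, t_2)\) with \(2t_3 \geq t_2\); both manoeuvres serve the same purpose of forcing the three-point figure to be a genuine metric while preserving the inequality \(f(\text{long}) \leq 2f(\text{short})\) via monotonicity.
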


\begin{proof}
Let \(\mathcal{F}\) be \(2\)-separating and let \((X, d)\) be a metric space. 

\(\ref{ch2:th4:s1} \Rightarrow \ref{ch2:th4:s2}\). Suppose \(f \circ d\) is a metric for every \(f \in \mathcal{F}\). If \(d\) is not an ultrametric on \(X\), then there exist \(x_1\), \(x_2\), \(x_3 \in X\) such that
\[
a := \max \{d(x_1, x_3), d(x_2, x_3)\}, \quad \text{and} \quad b := d(x_1, x_2), \quad \text{and} \quad b > a > 0.
\]
Since \(\mathcal{F}\) is \(2\)-separating, there is \(f \in \mathcal{F}\) such that
\begin{equation}\label{ch2:th4:e4}
2f(a) < f(b).
\end{equation}
The function \(f\) is increasing. It implies the inequalities
\begin{equation}\label{ch2:th4:e5}
f(d(x_1, x_3)) \leqslant f(a) \quad\text{and} \quad f(d(x_2, x_3)) \leqslant f(a).
\end{equation}
From~\eqref{ch2:th4:e4} and \eqref{ch2:th4:e5} we obtain
\[
f(d(x_1, x_3)) + f(d(x_2, x_3)) < f(b) = f(d(x_1, x_2)),
\]
contrary to the triangle inequality for the metric \(f \circ d\).

\(\ref{ch2:th4:s2} \Rightarrow \ref{ch2:th4:s1}\). The validity of this implication follows from Theorem~\ref{ch2:th3}.

Suppose now that \(\mathcal{F}\) is not \(2\)-separating. Then it follows from Definition~\ref{ch2:d1} that there are \(t_1\), \(t_2 \in \RR^{+}\) such that \(0 < t_1 < t_2\) and
\begin{equation}\label{ch2:th4:e1}
2 f(t_1) \geqslant f(t_2)
\end{equation}
for all \(f \in \mathcal{F}\). We can find \(t_3 \in (t_1, t_2)\) such that the inequality 
\begin{equation}\label{ch2:th4:e2}
2 t_3 \geqslant t_2
\end{equation}
holds. Since every \(f \in \mathcal{F}\) is increasing, the condition \(t_3 \in (t_1, t_2)\) and~\eqref{ch2:th4:e1} imply the inequality
\begin{equation}\label{ch2:th4:e3}
2 f(t_3) \geqslant f(t_2)
\end{equation}
for every \(f \in \mathcal{F}\). Let \(X = \{x_1, x_2, x_3\}\) and let \(d \colon X \times X \to \RR\) be a function such that \(d(x_1, x_1) = d(x_2, x_2) = d(x_3, x_3) = 0\) and \(d(x_1, x_2) = t_2\) and \(t_3 = d(x_1, x_3) = d(x_2, x_3)\). Then we have 
\begin{equation*}
\max\{d(x_1, x_2), d(x_1, x_3), d(x_2, x_3)\} = d(x_1, x_2) = t_2 \leqslant 2t_3 = d(x_1, x_3) + d(x_2, x_3).
\end{equation*}
Hence, \((X, d)\) is a metric space. From~\eqref{ch2:th4:e2} and the definition of \(d\) it follows that \((X, d)\) is not an ultrametric space. To complete the proof it suffices to note that~\eqref{ch2:th4:e3} implies the triangle inequality for \(f \circ d\) with every \(f \in \mathcal{F}\).
\end{proof}

\begin{example}\label{ch2:ex1}
For every \(\alpha \in (0, \infty)\) and every \(t \in \RR^+\), we write 
\begin{equation}\label{ch2:ex1:e1}
f_{\alpha}(t) = t^{\alpha}.
\end{equation}
It is clear that every function \(f_{\alpha} \colon \RR^+ \to \RR^+\) defined by~\eqref{ch2:ex1:e1} is increasing and amenable. Let \(\mathcal{F} := \{f_{\alpha} \colon \alpha \in (0, \infty)\}\). The limit relation
\[
\lim_{\alpha \to \infty} \left(\frac{t_1}{t_2}\right)^{\alpha} = 0
\]
holds if \(0 \leqslant t_1 < t_2 < \infty\). Consequently \(\mathcal{F}\) is \(k\)-separating family for every \(k> 1\).
\end{example}

This example and Theorem~\ref{ch2:th4} imply the following.

\begin{corollary}\label{ch2:c2}
Let \((X, d)\) be a metric space. Then \(d\) is an ultrametric if and only if \(d^{\alpha}\) is a metric for every \(\alpha > 1\).
\end{corollary}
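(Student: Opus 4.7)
The plan is to deduce this corollary directly from Example~\ref{ch2:ex1} and Theorem~\ref{ch2:th4}, with a small adjustment to handle the restriction $\alpha > 1$ (as opposed to $\alpha > 0$ used in the example). I would set $\mathcal{G} := \{f_\alpha : \alpha > 1\}$, where $f_\alpha(t) = t^\alpha$, and treat each direction of the equivalence using a different earlier result.

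For the forward implication, suppose $d$ is an ultrametric on $X$. Each $f_\alpha$ with $\alpha > 1$ is increasing and amenable, so Theorem~\ref{ch2:th1} immediately gives that $f_\alpha \circ d = d^\alpha$ is an ultrametric, and in particular a metric. (One could also invoke the implication \ref{ch2:th4:s2}$\Rightarrow$\ref{ch2:th4:s1} of Theorem~\ref{ch2:th4}, but Theorem~\ref{ch2:th1} suffices and is more direct.)

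For the converse, I would verify that the restricted family $\mathcal{G}$ is still $2$-separating, and then apply the implication \ref{ch2:th4:s1}$\Rightarrow$\ref{ch2:th4:s2} of Theorem~\ref{ch2:th4}. Given $0 \leqslant t_1 < t_2$, if $t_1 = 0$ then any $f_\alpha \in \mathcal{G}$ trivially satisfies $2 f_\alpha(t_1) = 0 < t_2^\alpha = f_\alpha(t_2)$. If $t_1 > 0$ then $t_2/t_1 > 1$, so $(t_2/t_1)^\alpha \to \infty$ as $\alpha \to \infty$, and one can pick $\alpha > 1$ large enough that $(t_2/t_1)^\alpha > 2$, i.e.\ $2 f_\alpha(t_1) < f_\alpha(t_2)$. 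Thus $\mathcal{G}$ is $2$-separating, and Theorem~\ref{ch2:th4} yields that $d$ is an ultrametric as soon as $f_\alpha \circ d$ is a metric for every $\alpha > 1$.

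There is no real obstacle here; the only point requiring a moment of care is to confirm that cutting down the family from all $\alpha > 0$ (as in Example~\ref{ch2:ex1}) to $\alpha > 1$ preserves the $2$-separating property, which follows from the same limiting argument since it only relies on taking $\alpha$ arbitrarily large.
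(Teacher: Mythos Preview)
Your proof is correct and follows the same approach as the paper, which simply states that the corollary follows from Example~\ref{ch2:ex1} and Theorem~\ref{ch2:th4}. You are in fact a bit more careful than the paper in explicitly checking that the restricted family $\{f_\alpha : \alpha > 1\}$ is still $2$-separating (the paper's Example~\ref{ch2:ex1} literally uses all $\alpha > 0$), but this is a minor point since the same limit argument $\alpha \to \infty$ handles both cases.
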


\begin{remark}
If \(\alpha \in (0, 1)\), then \(d^{\alpha}\) is a metric for every metric space \((X, d)\). Following paper~\cite{TW2005}, we can say that the space \((X, d^\alpha)\) is a \(\frac{1}{\alpha}\)-snowflake. Thus Corollary~\ref{ch2:c2} claims that a metric \(d\) is an ultrametric if and only if \(d\) is \(\frac{1}{\alpha}\)-snowflake for every \(\alpha \in (0, 1)\). The proof of ultrametricity of the so-called metric space of resistances given by V.~Gurvich and A.~Gvishiani in \cite{Gurvich2010, Gvishiani1987} is a nontrivial example of application of the snowflake transformation \(d \mapsto d^{\alpha}\) in real-world model.
\end{remark}



\end{document}